\theoremstyle{plain}
\newtheorem*{conjecture}{Conjecture}
\newtheorem*{corollary}{Corollary}
\newtheorem{lemma}{Lemma}
\newtheorem{proposition}{Proposition}
\theoremstyle{remark}
\newtheorem{remark}{Remark}
\begin{document}
\title[On face numbers of neighborly cubical polytopes]{On face numbers of \\neighborly cubical polytopes}
\author{L\'aszl\'o Major}
\address{L\'aszl\'o Major\newline%
\indent Demartment of Statistics,   \newline%
\indent Faculty of Social Sciences, \newline%
\indent  E\"{o}tv\"{o}s Lor\'and University, Budapest \newline%
\indent  P\'azm\'any P\'eter s\'eét\'any 1/A, H-1117, Budapest, Hungary }
\email{major@tatk.elte.hu}%
\date{Jan 13, 2015}
\subjclass[2010]{Primary 05A10, 52B11; Secondary 52B12, 52B05}
\keywords{neighborly cubical polytope, cubical polytope, Pascal's triangle,  face vector, short cubical $h$-vector}

\begin{abstract}
Neighborly cubical polytopes are known as the cubical analogues of the cyclic  polytopes. Using the  short cubical $h$-vectors  of  cubical polytopes (introduced  by Adin), we derive  an explicit formula for the face numbers of the neighborly cubical polytopes. These face numbers form a unimodal sequence.
\end{abstract}%
\maketitle
\vspace{4mm}

The vector $\textbf{a}=(a_0,\ldots, a_{d-1})$ is  called \textit{unimodal} if for some (not necessarily unique) index $i$, $(a_0,\ldots, a_i)$ is non-decreasing  and  $(a_i,\ldots, a_{d-1})$ is non-increasing. If that is the case, we say that the unimodal vector $\textbf{a}$ \textit{peaks} at $i$. The question of unimodality of the members of certain classes of vectors  has been of  long-standing interest   in algebra, combinatorics and geometry  (see e.g. \cite{B1,sta1}).   The proof of unimodality of $\textbf{a}=(a_0,\ldots,a_{d-1})$, when all $a_i>0$, is sometimes (as in the case of Proposition \ref{thu}) based on proving the stronger property of \textit{log-concavity} ($a_{i-1}a_{i+1}\leq a_{i}^2$ for all $1<i<n$).

 By  \textit{$f$-vector} (or \textit{face vector}) we mean the vector $(f_0,\ldots,f_{d-1})$, where  $f_i$ is the number of $i$-dimensional proper faces of a $d$-polytope. The unimodality of face vectors of certain classes of polytopes is also extensively studied (see e.g. \cite{bj1,eck,maj,SZ1,zi1}). 
 
 The \textit{$d$-cube} (denoted by $C^d$)  is a polytope combinatorially equivalent to 
 the unit cube $[0, 1]^d$. Let $P$ be a $d$-polytope and let $0\leq r\leq d$. The \textit{$r$-skeleton} of  $P$ is the union of the $r$-dimensional faces of $P$. A $d$-polytope is called \textit{cubical} provided all its facets are combinatorially equivalent to the $(d-1)$-cube. A cubical $d$-polytope is called \textit{neighborly cubical} provided its $(\lfloor \frac{d}{2} \rfloor -1)$-skeleton is combinatorially equivalent to the $(\lfloor \frac{d}{2} \rfloor -1)$-skeleton of a cube. If this cube is the $n$-cube ($n\geq d$), then the neighborly cubical polytope in question has $2^n$ vertices.
 
  The concept of neighborly cubical polytopes was introduced by Babson, Billera and Chan \cite{bil}.   Joswig and Ziegler proved in \cite{jos} that there exists a $d$-dimensional neighborly cubical polytope  with $2^n$ vertices for any $n\geq d\geq 2$. They  constructed neighborly cubical polytopes as linear projections of cubes. 
 
   The $f$-vector of a neighborly cubical polytope is determined by the parameters $n$ and $d$. According to the definition, the first half of the $f$-vector of a  neighborly cubical $d$-polytope (with $2^n$ vertices) is identical to the first half of the $f$-vector of the $n$-cube. Formally, if $P$ is  a  neighborly cubical  $d$-polytope (with $2^n$ vertices), then 
 \begin{equation}\label{kocka}
f_i(P)=f_i(C^n)=2^{n-i}\binom{n}{i} \hspace{4mm}  \text{for} \hspace{3mm} 0\leq i \leq \Big\lfloor \frac{d}{2} \Big\rfloor -1
 \end{equation}

The following important combinatorial invariant of a cubical polytope is introduced  by Adin \cite{adi}. 
Let $P$ be a cubical $d$-polytope with f-vector $\textbf{f}=(f_0,\ldots,f_{d-1})$ and let $H$ be the $d\times d$ matrix given by
\begin{equation*}\label{fhH}
H(i,j)=2^{-j}\binom{d-i-1}{d-j-1}, \hspace{4mm}  \text{for} \hspace{3mm} 0\leq i,j \leq d-1
\end{equation*}
  Define the \textit{short cubical h-vector} $\textbf{h}^{(sc)}=(h_0^{(sc)},\ldots,h_{d-1}^{(sc)})$ of $P$ by $\textbf{h}^{(sc)}=\textbf{f}\cdot H^{-1}$.
Equivalently, the face vector of $P$ can be expressed by
\begin{equation}\label{fh}
\textbf{f}=\textbf{h}^{(sc)}\cdot H
\end{equation}
 In the sequel, we call the vector $\textbf{h}=(h_0,\ldots,h_{d-1})=2^{-d}\cdot \textbf{h}^{(sc)}$  \textit{scaled short cubical $h$-vector} (or \textit{scaled $h$-vector} for short) of $P$.
The scaled short cubical $h$-vector of a neighborly cubical polytope
is a symmetric vector of integers (see Lemma \ref{hh} and Proposition \ref{h3}), just like the short cubical $h$-vector $(\textbf{h}^{(sc)})$ of any cubical polytope.

For example, each component of the scaled $h$-vector of a cube is $1$ and the scaled $h$-vector of a $4$-dimensional neighborly cubical polytope with $64$ vertices is $(4,12,12,4)$.

Due to  (\ref{fh}), to know the scaled $h$-vector of a neighborly cubical polytope is equivalent to knowing its $f$-vector. The relations between the scaled $h$-vector and  the $f$-vector  can be visualized following an observation of Stanley \cite{sta} as adapted
by Lee in \cite{lee}. We replace the numbers $\binom ii$ of Pascal's triangle by the components $h_i$ for $0\leq i \leq d-1$ and the numbers $\binom i0$ by the numbers $2^{n-d+i}$  for $0\leq i \leq d$, then we compute the internal entries  as the upper left neighbour plus two times the upper right neighbour. The $f$-vector of the neighborly cubical polytope emerges in the $(d)^{th}$ row of this modified Pascal's triangle.\vspace{2.7mm} \\
\textbf{Example.} We compute the $f$-vector of a $4$-dimensional neighborly cubical polytope with $64$ vertices by using the above method.

\setlength{\unitlength}{0.75mm}
\begin{picture}(100,68)(0,0.5)

\put(90,58){${4}$} 

\put(85,50){$8$}\put(95,50){${12}$} 

\put(80,42){$16$}\put(90,42){$32$} \put(100,42){${12}$}

\put(75,34){$32$} \put(85,34){$80$}\put(94,34){$56$} \put(104.3,34){${4}$}

\put(60,26){$f$ =}\put(68,26){ $( 64$} \put(80,26){$192$} \put(90,26){$192$} \put(100,26){$64)$ }

\put(60,30.9){\line(1,0){49.2}}

\end{picture}
\vspace{-21mm}\\
\begin{lemma}[Adin (47) in \cite{adi}]\label{hh} Let $P$ be a neighborly cubical $d$-polytope, then for the scaled $h$-vector of $P$ we have:
\begin{enumerate}
\item[(\textit{i})] $h_i=h_{d-i-1}$ for $0\leq i \vspace{1.5mm}\leq d-1$,
\item[(\textit{ii})]$\displaystyle\sum_{i=0}^{d-1}(-1)^ih_i=1$ \textrm{ if $d$ is odd}.
\end{enumerate}
\end{lemma}
\begin{lemma}\label{hh1}Let  $P$ be a neighborly cubical $d$-polytope with $2^n$ $(n\geq d\geq 2)$ vertices, then
$$h_i(P)=f_i(C^{n-d+i})\hspace{4mm}\text{for all}\hspace{2mm}0 \leq i\leq \Big\lfloor\frac d2 \Big\rfloor-1,$$
where $C^{n-d+i}$ denotes the $(n-d+i)$-cube.
\end{lemma}
\begin{proof}According to (\ref{kocka}) and the definition of the   scaled $h$-vector, we have to show that for all $0\leq i\leq \lfloor\frac d2 \rfloor-1$,
$$\sum_{k=0}^i(-1)^{i-k}2^{k-d}\binom{d-k-1}{d-i-1}f_k(C^n)=f_i(C^{n-d+i}).$$
The above equation is equivalent to the following one: 
\begin{equation}\label{jaj}
\sum_{k=0}^i(-1)^{i-k}\binom{d-k-1}{d-i-1}\binom nk= \binom{n-d+i}{i}.
\end{equation}
Equation (\ref{jaj}) can be proved by induction (for a proof of an analogous combinatorial identity see e.g.  \cite[Theorem 9.2.2]{gru}).
\end{proof}
The following proposition  gives  the components of the scaled $h$-vector of a neighborly cubical polytope.  
\begin{proposition}\label{h3} The scaled $h$-vector $(h_0,\ldots,h_{d-1})$ of a $d$-dimensional  neighborly cubical polytope with $2^n$ $(n\geq d\geq 2)$ vertices is given \vspace{2mm}by
 \[ h_i=\left\{ \begin{array}{ll}
2^{n-d} \displaystyle\binom{n-d+i}{i} & \textrm{for \;$i\leq \lfloor\frac d2 \rfloor-1$} \\
\displaystyle\sum_{j=0}^{n-d}2^j\displaystyle\binom{\frac{d-3}{2}+j}{j} &
\textrm{for \;$i= \frac {d-1}{2} $ if $d$ is odd} \\
2^{n-d} \displaystyle\binom{n-i-1}{d-i-1} &  \textrm{for \;$i> \lfloor\frac d2 \rfloor-1$} 
\end{array} \right. \]
\end{proposition}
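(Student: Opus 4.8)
The plan is to split the proposition into its three cases and turn each into an identity of binomial coefficients. For $i\le\lfloor d/2\rfloor-1$, the neighborly cubical condition forces the $(\lfloor d/2\rfloor-1)$-skeleton of $C_d^n$ to coincide with that of the $n$-cube, so $f_k=\binom{n}{k}2^{n-k}$ for every $k\le i$, which are exactly the indices occurring in the defining formula~(\ref{hf}). Substituting gives $h'_i=2^{n-d}\sum_{k=0}^{i}(-1)^{i-k}\binom{d-k-1}{d-i-1}\binom{n}{k}$. Writing $\binom{d-k-1}{d-i-1}=\binom{d-k-1}{i-k}$ and applying upper negation, $(-1)^{i-k}\binom{d-k-1}{i-k}=\binom{i-d}{i-k}$, so the sum collapses to $\sum_{k}\binom{i-d}{i-k}\binom{n}{k}=\binom{n-d+i}{i}$ by the Vandermonde convolution, which is the claimed value.

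For the symmetry $h'_i=h'_{d-1-i}$ when $i>\lfloor d/2\rfloor-1$, I would prove the stronger statement that it holds for every cubical $d$-polytope, being merely a repackaging of~(\ref{ds}). Write~(\ref{fh}) as $\mathbf f=\mathbf h'M$ with $M(i,k)=2^{d-k}\binom{d-i-1}{d-k-1}$; this $M$ is upper triangular with nonzero diagonal, its inverse being displayed in~(\ref{hf}). Then $\mathbf fE=\mathbf f$ reads $\mathbf h'ME=\mathbf h'M$, so it suffices to establish the matrix identity $ME=RM$, where $R$ is the order-reversing permutation matrix; for then $\mathbf h'RM=\mathbf h'M$ and, cancelling $M$, $\mathbf h'R=\mathbf h'$, i.e. $h'_{d-1-j}=h'_j$ for all $j$. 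After the powers of $2$ cancel, $ME=RM$ is the entrywise identity $\sum_{\ell}(-1)^{\ell}\binom{d-i-1}{\ell}\binom{d-1-\ell}{j}=\binom{i}{d-1-j}$, a standard finite-difference evaluation that one checks by induction on $d-i-1$ using Pascal's rule. When $d$ is even this, together with the first case, already determines every $h'_i$; when $d$ is odd it gives all of them except the central value $h'_{(d-1)/2}$, for which $d-1-i=i$ and the symmetry says nothing.

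It remains to compute $h'_{(d-1)/2}$ for odd $d$, and here I would use Lemma~\ref{hh}: $\sum_{i=0}^{d-1}(-1)^ih'_i=1$. Since $d-1$ is even, pairing $i$ with $d-1-i$ and invoking the symmetry just established gives $2\sum_{i=0}^{(d-3)/2}(-1)^ih'_i+(-1)^{(d-1)/2}h'_{(d-1)/2}=1$, whence, after inserting the values $h'_i=2^{n-d}\binom{n-d+i}{i}$ from the first case,
\[
h'_{(d-1)/2}=(-1)^{(d-1)/2}\Bigl(1-2^{n-d+1}\sum_{i=0}^{(d-3)/2}(-1)^i\binom{n-d+i}{i}\Bigr).
\]
The proof is finished once one shows this equals $\sum_{j=0}^{n-d}2^j\binom{(d-3)/2+j}{j}$; writing $r=(d-3)/2$ and $m=n-d$ and clearing the global sign, this is the identity $2^{m+1}\sum_{i=0}^{r}(-1)^i\binom{m+i}{i}=1+(-1)^r\sum_{j=0}^{m}2^j\binom{r+j}{j}$.

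I expect this last identity to be the main obstacle: the first two cases are essentially bookkeeping plus textbook binomial sums, whereas the central entry requires a genuinely new alternating-sum evaluation in which the signs and summation ranges must be handled with care. I would prove it by induction on $r$, the base case $r=0$ being the geometric sum $1+1+2+\cdots+2^m=2^{m+1}$, with the inductive step reducing — after a short inner induction on $m$ — to iterated use of the Pascal relation $\binom{m+r+1}{r+1}+\binom{m+r+1}{r}=\binom{m+r+2}{r+1}$ (this is also where the paper's ``Pascal's triangle'' viewpoint presumably enters). Assembling the three cases then yields the stated closed form for the whole $h'$-vector.
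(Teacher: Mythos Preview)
Your argument is correct and follows the same three–part outline as the paper: the cube face numbers for $i\le\lfloor d/2\rfloor-1$, the cubical Dehn--Sommerville equations~(\ref{ds}) for the symmetry $h'_i=h'_{d-1-i}$, and Lemma~\ref{hh} for the central entry when $d$ is odd. The differences are only in packaging. For the symmetry you use the matrix $M$ of~(\ref{fh}) and prove $ME=RM$, whereas the paper uses its inverse $H$ from~(\ref{hf}) and shows that column $j$ of $H$ coincides with column $d-1-j$ of $EH$; both reductions rest on the same binomial identity (your finite-difference evaluation is exactly the identity the paper cites from Gr\"unbaum). For the central entry the paper keeps $d$ fixed and inducts on $n$, while you isolate the identity $2^{m+1}\sum_{i=0}^{r}(-1)^i\binom{m+i}{i}=1+(-1)^r\sum_{j=0}^{m}2^j\binom{r+j}{j}$ and induct on $r$ with an inner induction on $m=n-d$. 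The paper's choice is the more economical one: inducting on $m$ alone (equivalently on $n$) makes the increment on the right a single term $2^{m}\binom{r+m}{m}$, and one application of Pascal's rule to $2\binom{m+i}{i}-\binom{m-1+i}{i}$ telescopes the left side directly, so no outer induction on $r$ is needed. Your double induction works, but you may want to swap to the simpler $n$-induction.
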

\vspace{0.7mm}
\begin{proof}
We have to verify the statement concerning the  middle component of $\textbf{h}$  assuming that $d$ is odd (other cases  follow directly from Lemma \ref{hh} and Lemma \ref{hh1}). We prove by induction on the integer $n$.
If $n=d$, then the statement is obvious, because we have the $d$-cube in question. Otherwise, by using Lemma \ref{hh}, Pascal's rule $\binom{m}{k}=\binom{m-1}{k-1}+\binom{m}{k-1}$ and the induction hypothesis, we establish the statement for any integer $n\geq d$.
\end{proof}
\begin{remark}One can observe the following relation between the simplicial $h$-vector $\textbf{h}^{(s)}$ of a  neighborly simplicial $d$-polytope $S$ and the scaled $h$-vector $\textbf{h}$ of a neighborly cubical $(d+1)$-polytope $C$ (assuming that $d$ is odd):
$$\textbf{h}^{(s)}(S)=2^{n-d-1}\textbf{h}(C)$$
\end{remark} 
\begin{lemma}\label{le2}
The scaled $h$-vector of a  neighborly cubical polytope is log-concave. 
\end{lemma}
\begin{proof}
Let $P$ be  a $d$-dimensional neighborly cubical polytope with $2^n$ vertices and with the scaled $h$-vector  $(h_{0},\ldots,h_{d-1})$.
First, we assume that $d$ is even. Let $m=d/2-1$. The log-concavity of $(h_0,\ldots,h_{m})$ (and also the log-concavity of $(h_{m+1},\ldots,h_{d-1})$ because of the symmetry) follows from Theorem 1 of Su and Wang \cite{wang}. From  Proposition \ref{h3}, it is clear that $(h_0,\ldots,h_{m})$ is increasing and   $(h_{m+1},\ldots,h_{d-1})$  is decreasing, therefore the whole scaled $h$-vector of $P$ is log-concave. 

Now we assume that $d$ is odd.  Let us denote $\lfloor\frac d2  \rfloor-1$ by $m$ and $2^{n-d}\binom{n-d+m+1}{m+1}$ by $h$. Due to the above reasoning, we obtain that $(h_0,\ldots,h_{m})$ and $(h_{0},\ldots,h_{m},h)$ are both log-concave. Using the identity $\binom{l+1}{k+1}=\binom{l}{k}+\binom{l-1}{k}+\cdots + \binom{k}{k}$, one can show that $h_m<h_{m+1}< h$, therefore the whole scaled $h$-vector of $P$ is log-concave. 
\end{proof}

 \vspace{0mm}
\begin{proposition}\label{thu} The $f$-vector of a  neighborly cubical polytope is log-concave, hence unimodal.
\end{proposition}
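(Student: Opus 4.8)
The plan is to prove the stronger property that every $f$-vector $\mathbf f=(f_0,\dots,f_{d-1})$ occurring here is log-concave; since all $f_i>0$, log-concavity yields unimodality.

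I would read $\mathbf f$ off from the modified Pascal triangle $T$ described after Proposition \ref{h3}: the array with left edge $T(r,0)=2^{n-d+r}$, right edge $T(r,r)=h'_r$, interior rule $T(r,c)=T(r-1,c-1)+2\,T(r-1,c)$, and with $\mathbf f$ appearing as its $d$-th row. (That the multiplicities $2^{d-k}$ implicit in (\ref{fh}) do no harm is already visible in $f_{k-1}f_{k+1}\le f_k^2$, where the powers of $2$ on the two sides agree.) It therefore suffices to prove that every row of $T$ is log-concave.

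This I would do by induction on the row index, the one- and two-entry rows being trivial. For the interior entries the inductive step is the elementary fact that if $(a_i)$ is positive and log-concave and $b_i:=a_{i-1}+2a_i$, then
\[b_i^{2}-b_{i-1}b_{i+1}=(a_{i-1}^{2}-a_{i-2}a_i)+4(a_i^{2}-a_{i-1}a_{i+1})+2(a_{i-1}a_i-a_{i-2}a_{i+1})\ge 0,\]
the three brackets being nonnegative because log-concavity makes the ratios $a_i/a_{i-1}$ non-increasing; the left-edge entries $2^{n-d+r}$ are covered by the same computation with a virtual leading $0$. The ingredient the induction needs is that the right edge of $T$ --- the $h'$-vector --- is itself positive and log-concave. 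From Proposition \ref{h3}, $h'_i=2^{n-d}\binom{n-d+i}{i}$ for $i\le\lfloor d/2\rfloor-1$ and $h'_i/h'_{i-1}=(n-d+i)/i$ decreases in $i$, giving log-concavity throughout the lower half; with the symmetry $h'_i=h'_{d-1-i}$ this extends to all of $h'$ save that, for odd $d$, crossing the middle term comes down to the lone inequality $h'_{(d-5)/2}\,h'_{(d-1)/2}\le (h'_{(d-3)/2})^{2}$. Using the value of the middle component from Proposition \ref{h3}, this becomes
\[\sum_{j=0}^{n-d}2^{j}\binom{\tfrac{d-3}{2}+j}{j}\ \le\ \frac{n-\tfrac{d+3}{2}}{\tfrac{d-3}{2}}\;2^{n-d}\binom{n-\tfrac{d+3}{2}}{\tfrac{d-3}{2}},\]
which I would prove by induction on $n$ using Pascal's rule, just as the middle component itself is handled in the proof of Proposition \ref{h3}.

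I expect the real obstacle to be the behaviour of $T$ along its right edge. Pushing log-concavity through row $r$ requires $T(r,r-2)\,h'_r\le T(r,r-1)^{2}$; the two subdiagonal entries telescope to $T(r,r-1)=2\sum_{i=0}^{r-1}h'_i$ and $T(r,r-2)=4\sum_{0\le i<j\le r-1}h'_i$, so what remains to be shown is
\[h'_r\!\!\sum_{0\le i<j\le r-1}\!\!h'_i\ \le\ \Bigl(\sum_{i=0}^{r-1}h'_i\Bigr)^{2}\qquad(2\le r\le d-1).\]
This follows from positivity and log-concavity of $h'$, but not by a term-by-term comparison, and is the technically heaviest point. (One can instead sidestep the triangle: the identity $\sum_{k}g_{d-1-k}x^{k}=H(1+x)$, with $g_k=\sum_{i\le k}\binom{d-i-1}{d-k-1}h'_i$ and $H(y)=\sum_{i=0}^{d-1}h'_iy^{i}$, reduces everything to the statement that a polynomial with positive log-concave coefficients keeps log-concave coefficients under $y\mapsto y+1$; supplying that transfer fact then becomes the crux.)
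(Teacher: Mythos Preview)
Your overall strategy---show that $h'$ is positive and log-concave, then transfer this to $f$ via the relation (\ref{fh})---is exactly the paper's. The difference is in how the transfer is executed. The paper does it in one line by invoking Corollary~8.3 of Brenti \cite{B1}: if $\mathbf a$ is positive and log-concave then so is $b_k=\sum_{i\le k}\binom{d-i-1}{d-k-1}a_i$; multiplying by the log-concave sequence $2^{d-k}$ finishes. This is precisely the ``transfer fact'' you relegate to a parenthesis at the end (your identity $\sum_k g_{d-1-k}x^k=H(1+x)$ uses the symmetry of $h'$ to recast Brenti's statement as preservation of log-concavity under $y\mapsto 1+y$). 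So the crux you identify is a known theorem, and citing it collapses the argument.

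Your primary route through the modified Pascal triangle is correct in outline but does not save work. The interior step is fine, and your telescoped formulas $T(r,r-1)=2\sum_{i\le r-1}h'_i$, $T(r,r-2)=4\sum_{0\le i<j\le r-1}h'_i$ are right; but the right-edge inequality $h'_r\sum_{j=0}^{r-2}S_j\le S_{r-1}^2$ (with $S_j=\sum_{i\le j}h'_i$) is left unproven. It is true, yet establishing it from log-concavity of $h'$ alone is essentially the same difficulty as the Brenti transfer itself, so the detour through the triangle buys nothing. A second, minor omission: for odd $d$ you must also check log-concavity \emph{at} the middle index, namely $h'_{(d-3)/2}\le h'_{(d-1)/2}$ (equivalently $h'_{(d-3)/2}h'_{(d+1)/2}\le h'^2_{(d-1)/2}$), not only the inequality at $(d-3)/2$ that you wrote down. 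The paper handles the whole log-concavity of $h'$ by a short induction on $n$, as you suggest for the middle term.
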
 
\begin{proof}
 Corollary 8.3 in \cite{B1} states that the positive vector $\textbf{b}=(b_0,\ldots, b_{d-1})$, where $$b_{k}=\sum_{i=0}^k \binom{d-i-1}{d-k-1}a_i, \hspace{2mm}\text{for } \hspace{2mm}0\leq k \leq  d-1,$$ is log-concave if $\textbf{a}=(a_0,\ldots, a_{d-1})$ is a positive log-concave vector. Since the componentwise product (Schur product) of two log-concave vectors is log-concave as well, Corollary 8.3  implies that the positive vector $\textbf{c}=(c_0,\ldots, c_{d-1})$, where $$c_{k}=2^{d-k}\sum_{i=0}^k \binom{d-i-1}{d-k-1}a_i, \hspace{2mm}\text{for } \hspace{2mm}0\leq k \leq  d-1,$$ is log-concave if $\textbf{a}=(a_0,\ldots, a_{d-1})$ is a positive log-concave vector.  Due to (\ref{fh}),  the $f$-vector of a  neighborly cubical polytope can be given by  
 \begin{equation*}f_{k}=2^{d-k}\sum_{i=0}^k \binom{d-i-1}{d-k-1}h_i, \hspace{2mm}\text{for } \hspace{2mm}0\leq k \leq  d-1.\end{equation*}
By Lemma  \ref{le2},  the scaled $h$-vectors of the neighborly cubical polytopes are always log-concave. Consequently, we obtain that the $f$-vectors of the neighborly cubical polytopes are log-concave indeed. 
\end{proof}

Joswig and Ziegler (Corollary 19. in \cite{jos}) gave a formula for the number of facets of  neighborly cubical polytopes.
 The following proposition gives the complete $f$-vector of  neighborly cubical polytopes.  This proposition can be proved by using the concept of the scaled $h$-vector and its properties given in Proposition \ref{h3}. The proof  is completely analogous to the proof of the corresponding result of neighborly simplicial polytopes (see e.g. \cite{zie}, Section \vspace{2mm}8.4).

\begin{proposition}\label{pfk} The $f$-vector $(f_0,\ldots,f_{d-1})$ of a $d$-dimensional  neighborly cubical polytope with $2^n$ $(n\geq d\geq 2)$ vertices is given \vspace{2mm}by
\begin{displaymath}
 f_k=\left\{ \begin{array}{ll}
2^{n-k}\displaystyle\sum_{i=0}^{\frac{d-2}{2}} 
\textstyle
\Big(\binom{d-i-1}{k-i}+\binom{i}{k-d+i+1}\Big)\binom{n-d+i}{i} &
\textrm{if $d$ is even} \\
2^{n-k}\Bigg(\displaystyle\sum_{i=0}^{\frac{d-3}{2}} 
\textstyle
\Big(\binom{d-i-1}{k-i}+\binom{i}{k-d+i+1}\Big)\binom{n-d+i}{i}+\displaystyle\sum_{j=0}^{n-d}2^{-j}\textstyle\binom{\frac{d-1}{2}}{d-k-1}
\binom{n-\frac{d+3}{2}-j}{n-d-j}\Bigg) & \textrm{if $d$ is odd} 
\end{array} \right. 
\end{displaymath}
\end{proposition}
\vspace{3mm}

\begin{corollary}\label{c1}
For every positive integer $d$, there is an integer $N(d)$, such that for all $n\geq N(d)$ the face vector of a neighborly cubical $d$-polytope with $2^n$ vertices peaks at $\lfloor 2(d-1)/3 \rfloor$.
\end{corollary}
\begin{proof}
The proof is based on straightforward computations, we give only the main steps of the proof. Let $f_k(d,n)$ denote the $k^{th}$ component of the $f$-vector of a neighborly cubical $d$-polytope with $2^n$ vertices. Let $l=\lfloor 2(d-1)/3 \rfloor$ and $m=\lfloor (d-2)/2 \rfloor$. We have to show that $f_{l-1}(d,n)\leq f_l(d,n)$ and $f_{l}(d,n)\geq f_{l+1}(d,n)$ if $n$ is large enough. The latter inequality follows directly from Theorem 1 in \cite{maj1}.

First, let $d$ be even. According to the formula of Proposition \ref{pfk}, we have to show that for large $n$,
$$
\sum_{i=0}^{m} 
\textstyle
\Big(\binom{d-i-1}{l-i}-2\binom{d-i-1}{l-1-i}+\binom{i}{l-d+i+1}-2\binom{i}{l-d+i}\Big)\binom{n-d+i}{i}\geq 0$$
We deal with the first $m$ terms and the last term of the above sum separately. It can be shown that the following sum is less than the above one
$$
\textstyle\binom{n-d+m}{m}-2\cdot\displaystyle\sum_{i=0}^{m-1} 
\textstyle \binom{d-i-1}{l-1-i}\binom{n-d+i}{i}$$
Consequently, it is enough to prove that this sum is non-negative for large $n$. By using the identity $\binom{a+1}{b}=\binom{a}{b}+\binom{a-1}{b-1}+\cdots \binom{a-b}{0}$, we obtain that
$$
2\cdot\displaystyle\sum_{i=0}^{m-1} 
\textstyle \binom{d-i-1}{l-1-i}\binom{n-d+i}{i}\leq 2\cdot\binom{d}{l-1}\binom{n-d+m-1}{m-1}\leq \textstyle\binom{n-d+m}{m} $$
if $n\geq 2m\cdot\textstyle\binom{d}{l-1}+d-m$.

Now, let $d$ be  odd. To prove that $f_{l-1}(d,n)\leq f_l(d,n)$ we need to investigate the second summation of the formula in Proposition \ref{pfk} for odd $d$: $$a_k=2^{n-k}\displaystyle\sum_{j=0}^{n-d}2^{-j}\textstyle\binom{m+1}{d-k-1}
\binom{n-m-j}{n-d-j} \hspace{3mm} \textrm { for } \hspace{2mm} 0\leq k \leq d.$$
Due to the proof of the case  of even $d$, we have $f_{l-1}(d,n)-a_{l-1}\leq f_l(d,n)-a_{l}$, hence it remains to prove that $a_{l-1}\leq a_{l}$. Since $a_{l}- a_{l-1}\geq 0$ if
$$\Big(\textstyle\binom{m+1}{d-l-1}-2\binom{m+1}{d-l}\Big)\cdot\displaystyle\sum_{j=0}^{n-d}2^{-j}\textstyle
\binom{n-m-j}{n-d-j} \geq 0$$ 
it is enough to verify that $\textstyle\binom{m+1}{d-l-1}-2\binom{m+1}{d-l}\geq0$. Therefore, we obtain that the positivity of $a_{l}- a_{l-1}$ does not depend on $n$.
\end{proof}
\begin{remark}By estimating the $f$-vectors of cyclic polytopes (with Stirling's formula), Ziegler \cite{zi1} showed that the $f$-vector of a cyclic polytope with $n$ vertices approximately peaks at $\lfloor 3(d-1)/4 \rfloor$ if $n$ is large compared to $d$. Analogously to the above corollary, one can prove that for every positive integer $d$, there is a positive integer $N(d)$, such that for all $n\geq N(d)$ the face vector of a cyclic  $d$-polytope with $n$ vertices peaks at $\lfloor 3(d-1)/4 \rfloor$. According to computer experiments, Schmitt conjectured (Conjecture 2.4.3 in \cite{sch}) that for every positive integer $d$, $N(d)$ can be less than or equal to $d^2$. 
\end{remark} 
\begin{conjecture}
Let $d$ be a positive integer. For every $\lfloor d/3 \rfloor\leq k \leq \lfloor 2(d-1)/3 \rfloor$, there is a  neighborly cubical $d$-polytope, whose face vector peaks at $k$.
\end{conjecture}
\noindent


\begin{thebibliography}{9}                                                                                            
\vspace{1.2mm} 

\bibitem {adi}  \textsc{R. M. Adin}:\ \textit{A new cubical $h$-vector},  Discrete Mathematics 157 (1996)\vspace{0.7mm} 3-14.

\bibitem {bil}  \textsc{E. K. Babson, L. J. Billera, C. S. Chan}:\ \textit{Neighborly cubical spheres and a cubical lower bound conjecture},  Israel Journal of Mathematics, 102, (1997),\vspace{0.7mm}  297-315.

\bibitem {bj1} \textsc{A. Bj\"orner}: \textit{Partial unimodality for f-vectors of simplicial polytopes and spheres}, Contemporary Mathematics\vspace{0.7mm} 178 (1994) 45-45.

\bibitem{B1} \textsc{F. Brenti}, \textit{Log-concave and unimodal sequences in algebra, combinatorics, and geometry: an update}, Contemporary Math., 178 (1994), pp. \vspace{0.7mm} 71-89.

\bibitem {eck}  \textsc{J. Eckhoff}: \textit{Combinatorial properties of $f$-vectors of convex polytopes}, Normat 54 (4)\vspace{0.7mm} (2006) 146-159.

\bibitem {jos} \textsc{M. Joswig, G. M. Ziegler}:\ \textit{Neighborly cubical polytopes}, Discrete  and Computational Geometry, Volume 24, Issue 2-3, (2000),\vspace{0.7mm} 325-344.

\bibitem {gru} \textsc{B. Gr\"unbaum}:\ \textit{Convex Polytopes}, Interscience, London, 1967; revised edition (V. Kaibel, V. Klee and G.M. Ziegler, editors), Graduate Texts \vspace{0.7mm}in Math., Springer-Verlag, (2003).
 
\bibitem {lee} \textsc{C. W. Lee}:\ \textit{Some recent results on convex polytopes}, in: "Mathematical Developments arising from Linear Programming"  (J. C. Lagarias and M. J. Todd, eds.), Contemporary Mathematics 114 \vspace{0.7mm}(1990),
 3-19. 

\bibitem {maj} \textsc{L. Major}:\ \textit{Unimodality and log-concavity of $f$-vectors for cyclic and ordinary polytopes}, Discrete Applied Mathematics,  161, 10-11,\vspace{0.7mm} (2013),  1669-1672.

\bibitem {maj1} \textsc{L. Major, Sz. T\'oth}:\ \textit{The Unimodality Conjecture for cubical polytopes}, preprint: arXiv:1501.00430v2 (2015).  

\bibitem {sch} \textsc{M. W. Schmitt}: \textit{On unimodality of f-vectors of convex polytopes}, Diplomarbeit, Technische
Universität Berlin, 2009. Betreuung: G. M. Ziegler

\bibitem {SZ1} \textsc{M. W. Schmitt, G. M. Ziegler},
\textit{Ten Problems in Geometry}, in {Shaping Space}, ed. {M. Senechal}, {Springer, New York}, {(2013)}, pp. \vspace{0.7mm}{279-289.}


\bibitem {sta} \textsc{R. P. Stanley}:\ \textit{The number of faces of simplicial polytopes and spheres}, Discrete Geometry and Convexity,  Ann. New York Acad. Sci. 440, edited by J.E. Goodman, et al.,\vspace{0.7mm}(1985), 212-223. 

\bibitem {sta1} \textsc{R. P. Stanley}: \textit{Log-concave and unimodal sequences in algebra, combinatorics, and geometry, in Graph theory and its applications} :  Ann. New York Acad. Sci., 576 (1), 
 \vspace{0.7mm}(1989) 500-535. 

\bibitem {wang}\textsc{X. T. Su, Y. Wang} On unimodality problems in Pascal's triangle, Electron. J. Combin. 15 (2008), Research Paper 113, 

\bibitem {zie} \textsc{G. M. Ziegler}:\ \textit{Lectures on Polytopes}, vol.
 152 of Graduate Texts in Mathematics, Springer-Verlag, New York, \vspace{0.7mm} 1995.

 \bibitem {zi1}  \textsc{G. M. Ziegler}: \textit{Convex polytopes: extremal constructions and $f$-vector shapes}, in: E. Miller, V. Reiner, B. Sturmfels (Eds.), Geometric Combinatorics,
in: IAS/Park City Mathematics Series, vol. 13, American Mathematical Society, Institute for Advanced Study, (2007) 617-691. arXiv:math.MG/
0411400v1, (2004)
 
\end{thebibliography}
\end{document}